    \let\subsubsection\subparagraph
    \title  {The Dehn twist on a sum of two $\KK$ surfaces}
    \author {P. B. Kronheimer and T. S. Mrowka%
      \thanks{%
        The work of the first author was supported by the National
        Science Foundation through NSF grant
        DMS-1707924. The work of the second author was supported by
        NSF grant DMS-1808794.}}
    \address {Harvard University, Cambridge MA 02138 \\
              Massachusetts Institute of Technology, Cambridge MA 02139}
\begin{document}

\maketitle            

\begin{abstract}
Ruberman gave the first examples of 
self-diffeomorphisms of four-manifolds that are isotopic to the identity in
the topological category but not smoothly so.            
We give another example of this phenomenon, using the Dehn twist along a
3-sphere in the connected sum of two $\KK$ surfaces.             
\end{abstract}

\section{Introduction}

A 2-dimensional Dehn twist is a non-trivial self-diffeomorphism of an
annulus, fixing the two boundary circles pointwise.  As a
generalization of this, for any $n\ge 2$, there is a
self-diffeomorphism of the $n$-manifold $[0,1] \times S^{n-1}$,
\[
       \delta_{n} : [0,1] \times S^{n-1} \to [0,1] \times S^{n-1}
\]
having the form $ \delta_{n}(t, s) = (t, \alpha_{t}(s)), $ where
$\alpha : [0,1]\to \SO(n)$ is a loop based at the identity element
lying in the non-trivial homotopy class. We can arrange that
$\delta_{n}$ is the identity near both boundary components, and this
allows to extend $\delta_{n}$ to a diffeomorphism of any $n$-manifold
$X$ provided only that an embedding of $[0,1]\times S^{n-1}$ in $X$ is
given. The resulting diffeomorphism $\delta: X\to X$ is referred to as
a Dehn twist along the sphere $S^{n-1}\subset [0,1]\times S^{n-1}$. In
particular if $X$ is a connected sum $X_{1}\csum X_{2}$, then one can
consider the Dehn twist along the separating sphere in the neck. In
this paper, we shall prove:

\begin{theorem}\label{thm:Dehn}
    Let $Z$ be the connected sum $\KK \csum \KK$, and let $\delta : Z\to Z$
    be a Dehn twist along the separating $S^{3}$ in the neck. Then
    the diffeomorphism $\delta$ is not isotopic to the identity.
\end{theorem}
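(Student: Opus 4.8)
Topologically $\delta$ is isotopic to the identity --- it acts trivially on the homotopy type of $Z$, and one may invoke four-dimensional topological surgery --- so the content of the theorem is entirely smooth, and the plan is to obstruct the smooth isotopy by a families version of the Seiberg--Witten/Bauer--Furuta invariant applied to the mapping torus of $\delta$. Form the smooth fibre bundle $Z\to M_{\delta}\to S^{1}$ with monodromy $\delta$; if $\delta$ were smoothly isotopic to the identity, then $M_{\delta}$ would be isomorphic as a bundle to the product $Z\times S^{1}\to S^{1}$. Since $Z$ is spin with $H^{1}(Z;\mathbf{Z}/2)=0$, so that the spin structure is unique (hence $\delta$-invariant), and since $\delta$ acts trivially on $H^{*}(Z)$ (it is supported near the null-homologous neck sphere), one can attach to the family its $\mathrm{Pin}(2)$--equivariant families Bauer--Furuta invariant: a $\mathrm{Pin}(2)$--equivariant stable homotopy class, with the fibrewise reducible solutions sitting at the fixed points, that depends only on the isomorphism type of the bundle. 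For the product bundle it is the ``constant'' class obtained from the ordinary invariant $\mathrm{BF}(Z)$ by smashing with $S^{1}_{+}$. It therefore suffices to show that $\mathrm{BF}_{\mathrm{fam}}(M_{\delta})$ differs from this constant class.

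To get at $\mathrm{BF}_{\mathrm{fam}}(M_{\delta})$ I would use the connected-sum description $Z=\KK\csum\KK$ together with the fact that $\delta$ is supported in a collar of the neck $S^{3}$. Stretching the neck in every fibre and running the standard gluing analysis in families, $\mathrm{BF}_{\mathrm{fam}}(M_{\delta})$ should be identified with the smash product $\mathrm{BF}(\KK)\wedge\mathrm{BF}(\KK)$, but twisted along the base $S^{1}$ by the monodromy that the Dehn twist imposes on the gluing data near the neck --- the identifications of the positive spinor bundles and of the self-dual two-forms of the two summands. This twist is built from a loop $\alpha$ generating the order-two group $\pi_{1}(\SO(4))$; lifting $\alpha$ to $\mathrm{Spin}(4)$ and tracking its action on the rank-one quaternionic Dirac index of each $\KK$ and on the spaces $H^{+}$ (on which $\mathrm{Pin}(2)$ acts through sign representations), the loop contributes a definite $\mathrm{Pin}(2)$--equivariant ``twist class'' whose underlying non-equivariant class reflects the order-two nature of $\delta$ and is expected to be the Hopf element $\eta$. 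The difference between $\mathrm{BF}_{\mathrm{fam}}(M_{\delta})$ and the constant class is then the image of $\mathrm{BF}(\KK)\wedge\mathrm{BF}(\KK)$ under multiplication by this twist class.

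It remains to prove that this element is non-zero. Non-equivariantly, and at the level of numerical Seiberg--Witten or Donaldson invariants, it vanishes --- which is precisely why one takes the connected sum of \emph{two} copies of $\KK$, so that neither smash factor is trivial, and why the ordinary invariants, which die on every connected sum with $b^{+}\ge 1$ pieces, are blind to $\delta$. Working $\mathrm{Pin}(2)$--equivariantly, I would extract a non-vanishing obstruction by mapping the relevant equivariant stable stem to equivariant $KO$-- or $KR$--theory and evaluating a characteristic number built from $\sigma(\KK)$, $\chi(\KK)$ and the twist, in the spirit of Furuta's $10/8$ theorem: the two quaternionic Dirac indices of the $\KK$ summands combine with the extra factor contributed by the Dehn twist, and the product survives in the equivariant group although its underlying non-equivariant class is null.

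The main difficulty will be the middle step: constructing the families Bauer--Furuta invariant for a four-manifold bundle over $S^{1}$, proving that the fibrewise neck-stretching and gluing formula hold, and --- most delicately --- pinning down exactly which $\mathrm{Pin}(2)$--equivariant class the loop in $\SO(4)$ contributes, keeping careful account of the sign representation on $H^{+}$ and of the quaternionic structure on the Dirac index as the family runs once around $S^{1}$. Once the gluing formula and the identification of the twist class are secured, the concluding non-vanishing is a finite equivariant-stable-homotopy computation of the type already familiar from $10/8$--type arguments.
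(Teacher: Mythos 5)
Your overall skeleton (mapping torus of $\delta$, a families Bauer--Furuta invariant, a fiberwise neck-stretching/gluing formula identifying it with a twisted product of the two $\KK$ invariants, then a non-vanishing argument) matches the paper's strategy, but two points go wrong. First, your claim that ``non-equivariantly \dots it vanishes,'' so that one must pass to $\mathit{Pin}(2)$-equivariance and detect the answer in equivariant $KO$/$KR$-theory via a $10/8$-type characteristic number, is mistaken. You are conflating the vanishing of the \emph{numerical} Seiberg--Witten invariants on connected sums with vanishing of the stable-homotopy class: by Bauer's non-equivariant product formula the Bauer--Furuta class of $\KK\csum\KK$ is $\eta_{1}\times\eta_{1}\neq 0$ in $\pi^{s}_{2}$, and the families invariant of the twisted family over $S^{1}$ is $\eta_{1}\times\eta_{1}\times\eta_{1}$, the element of order two in $\pi^{s}_{3}=\Z/24$, which is already non-zero non-equivariantly. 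That is exactly how the paper concludes; no equivariant refinement is needed (the $S^{1}$-action enters only as an auxiliary tool to build an isotopy), and indeed a $KO$-type evaluation is a poor candidate for detecting this order-two element, so the route you propose for the final step is both unnecessary and unlikely to close.

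Second, the step you flag as ``most delicate'' --- pinning down which class the loop $\alpha$ in $\SO(4)$, equivalently the spin deck transformation $\tau$, contributes --- is precisely the heart of the matter, and your proposal leaves it open with only the guess that it ``is expected to be the Hopf element.'' The paper's Proposition~\ref{prop:twist-calc} does this computation: one compares the framing of the finite-dimensional model obtained by rotating through left multiplication by $e^{I\theta}$ (which intertwines the Seiberg--Witten maps) with the quaternionic framing obtained from right multiplication, and the discrepancy is the loop in $\SO(4n)$ given by left- followed by right-multiplication, which is non-trivial in $\pi_{1}(\SO(4n))$ exactly when $n$, half the complex index of $D$, is odd --- i.e.\ when $\sigma=16 \bmod 32$, as for $\KK$. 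Without this computation (and the accompanying families extension of Bauer's gluing theorem, which the paper carries out fiberwise over the compact base), the twist class could a priori be trivial and the argument would not get off the ground; so as written the proposal identifies the right structure but leaves the essential calculation and an incorrect detection mechanism in place of the proof.
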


It is evident that $\delta$ induces the identity map on homology, and
by a theorem of Quinn \cite{Quinn}, applicable to closed
simply-connected four-manifolds in general, it follows that $\delta$
lies in the identity component of the homeomorphism group
$\Top(Z)$. Theorem~\ref{thm:Dehn} implies that it does not lie in the
identity component of $\Diff(Z)$. The first example of this phenomenon
-- an element in the kernel of the map
$\pi_{0}(\Diff(X))\to \pi_{0}(\Top(X))$ for a smooth 4-manifold $X$ --
was given by Ruberman in \cite{Ruberman-1}.  Additional examples were
presented later by Baraglia and Konno in \cite{Baraglia-Konno-1}. The
theorem above provides the first example where the Dehn twist on a
connected sum of simply connected 4-manifolds has
been shown to be non-trivial.

The techniques we employ here are drawn from the same toolkit that has
been used to detect other non-smoothability results for families in
dimension four: gauge theory (here the Seiberg-Witten equations), the
numerical invariants of families that they can be used to define
\cite{Ruberman-1,Ruberman-2,Kato-Konno-Nakamura}, and the homotopy
refinements of these that one may construct in the spirit of
\cite{Bauer-Furuta-1} and \cite{MingXu}.

The next statement, which is a corollary of the theorem above, was
proved earlier by Baraglia and Konno in \cite{Baraglia-Konno-2}. (The
version here is not explicitly stated in \cite{Baraglia-Konno-2}, but
an equivalent reformulation, Proposition~\ref{cor:w2} below, is a
special case of their results, as we explain in the next section.)

\begin{proposition}[\cf~\cite{Baraglia-Konno-2}]
    \label{cor:DehnBoundary}
    Let $X'$ be the 4-manifold with boundary obtained by removing an
    open ball from a $\KK$ surface. Let $\delta: X' \to X'$ be the
    diffeomorphism supported in the interior of $X'$ obtained by a
    Dehn twist along a 3-sphere parallel to the boundary. Then
    $\delta$ is not isotopic to the identity element in the group
    $\Diff(X', \partial X')$ of diffeomorphisms which fix the boundary
    pointwise.
\end{proposition}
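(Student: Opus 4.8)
The plan is to deduce Proposition~\ref{cor:DehnBoundary} from Theorem~\ref{thm:Dehn} by a gluing argument: a Dehn twist along the separating sphere of $Z=\KK\csum\KK$ can be arranged to be supported in a boundary collar on one side of the neck, and hence inside one of the two punctured copies of $\KK$.

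First I would set up a standard model for the connected sum. Write $Z=X'_{1}\cup_{\phi}X'_{2}$, where each $X'_{i}$ is a copy of the manifold-with-boundary $X'$ of the proposition and $\phi\colon\partial X'_{1}\to\partial X'_{2}$ is an orientation-reversing diffeomorphism of the boundary $3$-spheres; since the group of isotopy classes of self-diffeomorphisms of $S^{3}$ is generated by a reflection, the resulting closed manifold is diffeomorphic to $\KK\csum\KK$ regardless of the choice of $\phi$. Take the separating $S^{3}\subset Z$ to be a sphere in a collar of $\partial X'_{1}$ parallel to $\partial X'_{1}$, and take the embedded $[0,1]\times S^{3}$ defining the twist to lie in that same collar, inside $X'_{1}$. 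With these choices the Dehn twist $\delta_{Z}\colon Z\to Z$ of Theorem~\ref{thm:Dehn} restricts to the identity on $X'_{2}$ and restricts on $X'_{1}$ to exactly the diffeomorphism $\delta\colon X'\to X'$ of Proposition~\ref{cor:DehnBoundary}, a Dehn twist along a sphere parallel to $\partial X'$.

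Now suppose, for contradiction, that $\delta$ is isotopic to the identity in $\Diff(X',\partial X')$, and let $\{F_{t}\}_{t\in[0,1]}$ realize this, so $F_{0}=\delta$, $F_{1}=\mathrm{id}_{X'}$, and $F_{t}|_{\partial X'}=\mathrm{id}$ for all $t$. Define $G_{t}\colon Z\to Z$ to be $F_{t}$ on $X'_{1}$ and the identity on $X'_{2}$. Since each $F_{t}$ fixes $\partial X'_{1}$ pointwise, the two pieces agree along the gluing locus, so $G_{t}$ is a well-defined self-diffeomorphism of $Z$ depending smoothly on $t$; thus $\{G_{t}\}$ is an isotopy from $G_{0}=\delta_{Z}$ to $G_{1}=\mathrm{id}_{Z}$, contradicting Theorem~\ref{thm:Dehn}. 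Therefore $\delta$ is not isotopic to the identity in $\Diff(X',\partial X')$.

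All of the real content is imported from Theorem~\ref{thm:Dehn}. The step that still needs care is the geometric bookkeeping in the first paragraph: checking that the neck sphere of $Z$ together with its twisting region can genuinely be isotoped into a boundary collar of a single summand, so that $\delta_{Z}$ splits as claimed, and that this recovers the boundary-parallel Dehn twist on $X'$ up to the isotopy and the choice of loop in $\SO(4)$ already implicit in both statements. One should also observe that the Dehn twist along the separating $S^{3}$ is independent, up to isotopy, of the chosen neck, so that it is legitimate to work with this particular representative (equivalently, one reads Theorem~\ref{thm:Dehn} as a statement about it). Beyond this I anticipate no genuine obstacle.
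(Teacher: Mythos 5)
Your argument is correct in outline, but it proves the proposition by a genuinely different route than the paper does. You import Theorem~\ref{thm:Dehn} and use a soft gluing argument: isotope the neck sphere of $Z=\KK\csum\KK$ into a boundary collar of one punctured summand, and extend a hypothetical isotopy $F_{t}$ rel $\partial X'$ by the identity on the other summand to contradict the theorem. This is logically legitimate and non-circular within the paper, since the paper's proof of Theorem~\ref{thm:Dehn} (the families connect-sum product formula of Section~\ref{sec:sums}, giving $(\eta_{1})^{3}\ne 0$ in $\pi^{s}_{3}$) nowhere uses Proposition~\ref{cor:DehnBoundary}; indeed the introduction itself remarks that the proposition is a corollary of the theorem. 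The paper, however, proves the proposition independently and earlier: it reformulates it (Section~2) as the statement that the spin deck transformation $\tau\in\DiffSpin(X)$ is not in the identity component (Proposition~\ref{prop:tau}), and then computes the families Bauer--Furuta invariant of the single twisted $\KK$ family over the circle (Proposition~\ref{prop:twist-calc}), obtaining $\eta_{1}\times\eta_{1}\ne 0$ in $\pi^{s}_{2}$. That route is what the paper's machinery is built for, and it yields the finer statement about $\pi_{1}(\Diff(\KK))\to\Z/2$ (equivalently $w_{2}$ of families over $S^{2}$), whereas your route buys economy: given the theorem, only elementary differential topology is needed. Two points you gloss should be addressed explicitly. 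First, the gluing of $F_{t}$ with the identity: a diffeomorphism fixing $\partial X'$ only pointwise need not glue smoothly, so you should either invoke the standard fact that diffeomorphisms (and isotopies) rel $\partial X'$ can be replaced by ones fixing a collar, or work from the start with the subgroup fixing a neighborhood of the boundary, whose inclusion induces a bijection on $\pi_{0}$. Second, the identification of the neck twist with a boundary-parallel twist in $X'_{1}$ requires the (routine) facts that isotopic embeddings of $[0,1]\times S^{3}$ give isotopic Dehn twists and that the twist class is insensitive to the choice of loop in its class in $\pi_{1}(\SO(4))=\Z/2$ and to reversing the interval; you flag this but should carry it out. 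With those standard points filled in, your deduction stands, though it inverts the paper's expository order rather than reproducing its proof.
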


In this paper, we will reprove Proposition~\ref{cor:DehnBoundary},
using Bauer-Furuta invariants for families in a slightly different
way, and we will deduce Theorem~\ref{thm:Dehn} using a product theorem
for connect sums modeled on \cite{Bauer-2}.

\subparagraph{Acknowledgements.} Conversations with Sander Kupers and
Danny Ruberman were very helpful for the formulation of these results
and in the preparation of this paper. In particular, the authors would like
to thank Sander Kupers for pointing out the relevance of \cite{Kreck}
in the higher-dimensional case. 

\section{Families of spin manifolds}

To explain the connection between Proposition~\ref{cor:DehnBoundary}
and the results of \cite{Baraglia-Konno-2}, consider first the action
of $\Diff(\KK)$ on the frame bundle $F(\KK)$ and the resulting map
$e: \Diff(\KK) \to F(\KK)$ which one obtains by applying this to a
basepoint $\theta$ in $F(\KK)$. The map $e$ is a fibration whose fiber
(the stabilizer of $\theta$) has the weak homotopy type of the group
$\Diff(X', \partial X')$ and the exact sequence of the fibration gives
\[
    \pi_{1}(\Diff(\KK)) \stackrel{e_{*}}{\longrightarrow}
    \pi_{1}(F(\KK)) \longrightarrow \pi_{0}(\Diff(X', \partial X')
\]    
The fundamental group of $F(\KK)$ is $\Z/2$ (as it is for any
simply-connected spin manifold), and the class of the Dehn twist
$\delta$ in $\pi_{0}(\Diff(X', \partial X'))$ is the image of the
generator of $\pi_{1}(F(\KK))$. The assertion in
Corollary~\ref{cor:DehnBoundary}, that $\delta$ is non-trivial, is
therefore equivalent to saying that the map
\begin{equation}\label{eq:e}
          e_{*} : \pi_{1}(\Diff(\KK)) \to \Z/2
\end{equation}
is zero. 

The map $e_{*}$ can be interpreted in yet another way. Given a loop
$\gamma$ in $\Diff(\KK)$, form the fiber bundle $E\to S^{2}$ with
fiber $\KK$, using $\gamma$ as the clutching function. Then
$e_{*}[\gamma]$ is equal to the evaluation of $w_{2}(TE)$ on any
section of $E$.  An elementary restatement of the vanishing of $e_{*}$
is therefore the following:

\begin{proposition}[\cite{Baraglia-Konno-2}]\label{cor:w2}
    Let $E\to S^{2}$ be a smooth fiber bundle, with fiber a $\KK$
    surface. Then the Stiefel-Whitney class $w_{2}(TE)$ is zero.
\end{proposition}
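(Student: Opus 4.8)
The plan is to turn the question into a fibrewise gauge-theory computation on the bundle $E\to S^{2}$ itself, and to derive a contradiction from the \emph{assumption} that $w_{2}(TE)\neq 0$. Choosing a fibrewise metric, the exact sequence $0\to T^{v}E\to TE\to\pi^{*}TS^{2}\to 0$ of the submersion $\pi$ splits, so $TE\cong T^{v}E\oplus\pi^{*}TS^{2}$, and since $TS^{2}$ has trivial Stiefel--Whitney class this gives $w_{2}(TE)=w_{2}(T^{v}E)$. As the fibre is the $\KK$ surface, which is spin, $w_{2}(T^{v}E)$ restricts to $0$ on each fibre, and by the Serre spectral sequence the kernel of $H^{2}(E;\Z/2)\to H^{2}(\KK;\Z/2)$ is precisely $\pi^{*}H^{2}(S^{2};\Z/2)\cong\Z/2$ (using $H^{1}(\KK;\Z/2)=0$ to kill the relevant differentials). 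Hence $w_{2}(T^{v}E)=\pi^{*}a$ for a well-defined $a\in\Z/2$, which by the discussion above is exactly $e_{*}[\gamma]$ for the clutching loop $\gamma\in\pi_{1}(\Diff(\KK))$. So it suffices to rule out a $\KK$-bundle $E\to S^{2}$ with $a$ the generator; this is equivalent to Proposition~\ref{cor:DehnBoundary}.

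Suppose such an $E$ exists, and pick an odd lift $\tilde a\in H^{2}(S^{2};\Z)\cong\Z$ of $a$. Because each fibre is spin with $H^{1}(\KK;\Z)=0$, there is a family $\mathrm{spin}^{c}$ structure $\mathfrak s$ on $T^{v}E$ with $c_{1}(\mathfrak s)=\pi^{*}\tilde a$ that restricts on each fibre to the spin structure of $\KK$; note $c_{1}(\mathfrak s)^{2}=\pi^{*}(\tilde a^{2})=0$ and that the fibrewise Seiberg--Witten moduli space has formal dimension $d=\tfrac14\bigl(c_{1}^{2}-2\chi-3\sigma\bigr)=\tfrac14(0-48+48)=0$. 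I would then carry out the Bauer--Furuta construction fibrewise over $S^{2}$: finite-dimensional approximation of the family Seiberg--Witten equations, in the spirit of \cite{Bauer-Furuta-1,Bauer-2}, produces a $\mathrm{Pin}(2)$-equivariant stable map of sphere bundles over $S^{2}$ — the $\mathrm{Pin}(2)$-symmetry being available precisely because $\mathfrak s$ is fibrewise a genuine spin structure — hence a family Bauer--Furuta invariant lying in an $RO(\mathrm{Pin}(2))$-graded equivariant stable cohomotopy group of $S^{2}$.

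The contradiction is to be extracted by evaluating this class in two ways. Restricted to a fibre it is the ordinary Bauer--Furuta invariant of $\KK$ with its spin structure, which is nonzero and indeed $\mathrm{Pin}(2)$-equivariantly essential — $\KK$ is the extremal case of Furuta's $10/8$ theorem, $b_{2}(\KK)=22=\tfrac{10}{8}|\sigma|+2$. On the other hand the global twisting recorded by $c_{1}(\mathfrak s)=\pi^{*}\tilde a\neq 0$ — equivalently, the fact that $E$ is only \emph{fibrewise} spin, with non-trivial monodromy in the relevant spin/gauge group — shifts the $RO(\mathrm{Pin}(2))$-degree in which the invariant lives and imposes a relation on the resulting cohomotopy class that is incompatible with its being fibrewise essential; made explicit this is an equivariant cohomotopy computation in the spirit of the $10/8$ inequality, which the numbers for $\KK$ over $S^{2}$ violate. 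This forces $a=0$ and proves the proposition.

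I expect the last step to be the main obstacle, and the reason one cannot shortcut it with a numerical family invariant is instructive: since $b^{+}(\KK)=3$ sits at the borderline value $\dim S^{2}+1$, a generic family of fibrewise metrics can be kept reducible-free but two such families need not be joined through reducible-free ones, so a numerical family Seiberg--Witten invariant is a priori choice-dependent (family wall-crossing). One must therefore work with the stable-homotopy refinement throughout and control both the $\mathrm{Pin}(2)$-action and the effect of the non-spin twist on the equivariant stable homotopy type of the finite-dimensional approximation. This bookkeeping — and, for the passage from Proposition~\ref{cor:DehnBoundary} to Theorem~\ref{thm:Dehn}, the connected-sum product formula modelled on \cite{Bauer-2} — is where the real argument lies.
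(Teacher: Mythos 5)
Your opening reduction (splitting $TE$, the Serre spectral sequence step, and the identification of $w_{2}(T^{v}E)=\pi^{*}a$ with $e_{*}[\gamma]$) is fine and agrees with the paper's Section 2. But the core of the argument is missing, in two concrete places. First, the $\mathit{Pin}(2)$-equivariance you invoke is exactly what you do \emph{not} have under your standing assumption: ``fibrewise spin'' does not give a spin structure on the family, and the failure of the fibrewise spin structures to glue into a family spin structure \emph{is} the hypothesis $w_{2}(T^{v}E)=\pi^{*}a\neq 0$ you are trying to contradict. For your family $\spinc$ structure with $c_{1}=\pi^{*}\tilde a$ the families monopole map is only circle-equivariant globally; the charge-conjugation symmetry $J$ is twisted by the obstruction class, and setting up and exploiting that twisted symmetry over $S^{2}$ is the actual content of the argument in \cite{Baraglia-Konno-2} — it is not something you get for free by saying the structure is ``fibrewise a genuine spin structure.'' Second, the decisive step — that the degree shift caused by $\pi^{*}\tilde a$ is ``incompatible with the class being fibrewise essential,'' by ``an equivariant cohomotopy computation in the spirit of the $10/8$ inequality'' — is asserted, not performed; you yourself flag it as the main obstacle. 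As written, the proposal is a plausible research plan pointing toward the original Baraglia--Konno proof, not a proof.

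For comparison, the paper never works over $S^{2}$ and needs no equivariant stable homotopy at the end. It reformulates the statement as the assertion that the deck transformation $\tau\in\DiffSpin(\KK,\s)$ is not isotopic to the identity (equivalently $e_{*}=0$), i.e.\ that the twisted spin family $(X^{\tau},\s^{\tau})$ over the circle is non-trivial, and then computes its non-equivariant families Bauer--Furuta invariant directly: clutching the finite-dimensional approximation by left multiplication by $e^{I\theta}$ and comparing the resulting stable framing with the quaternionic one shows the invariant equals $\eta_{1}\mathbin{\scriptstyle\times}\eta(\KK)=\eta_{1}^{2}\neq 0$ in $\pi^{s}_{2}$; the circle/$\mathit{Pin}(2)$ action enters only as an auxiliary device to build that isotopy. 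Because everything is a framed-cobordism (stable cohomotopy) statement over $S^{1}$, the wall-crossing worry you raise for numerical invariants with $b^{+}=3$ simply does not arise. If you want to salvage your plan you must carry out the twisted-equivariant computation over $S^{2}$ in detail; otherwise the shorter route is the paper's reduction to the mapping torus of $\tau$ over $S^{1}$.
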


The above proposition is a consequence of \cite[Corollary
1.3]{Baraglia-Konno-2} and the arguments of \cite[Section
4.2]{Baraglia-Konno-2}. As shown in \cite{Baraglia-Konno-3}, the
corresponding statement in the topological category is false: there is
a non-smoothable topological family of $\KK$ surfaces over $S^{2}$
which has non-trivial Stiefel-Whitney class.

There is a third reformulation of Propositions~\ref{cor:DehnBoundary}
and \ref{cor:w2}, which is the one most convenient for our discussion
of the Bauer-Furuta invariants below. To set up the general context,
let $X$ be a smooth, oriented, closed 4-manifold, which we assume at
present is connected and simply connected with even intersection form.
In the absence of a Riemannian metric, a ``spin structure'' on $X$ can
be defined as a lift of the structure group of the oriented frame
bundle of $X$, from $\SL(4,\R)$ to the double cover $\tilde\SL(4,\R)$.
Our hypotheses imply that $X$ admits a spin structure $\s$, which is
unique up to isomorphism. The group of automorphisms of $\s$ is the
group of order $2$ generated by the deck transformation of the double
cover.

We write $\SDiff(X)$ for the group of \emph{orientation-preserving}
diffeomorphisms of $X$. The group $\SDiff(X)$ has a double cover
$\DiffSpin(X)\to\SDiff(X)$ consisting of pairs $(f, i)$, where
$f:X\to X$ is a diffeomorphism and $i: f^{*}(\s)\to \s$ is an
isomorphism of spin structures. Given an element $h\in \DiffSpin(X)$,
we can form a mapping torus $X^{h}$ fibering over the circle
$B=[0,1]/\sim$, together with a spin structure $\s^{h}$ on the
vertical tangent bundle of the fibration.

As a special case we can consider the element $\tau= (1, t)$ in
$\DiffSpin(X)$, where $1$ denotes the identity on $X$ and $t$ denotes
the deck transformation of the spin structure. We shall prove,

\begin{proposition}[\cite{Baraglia-Konno-2}]\label{prop:tau}
    If $X$ is a $\KK$ surface, then the involution $\tau$ in
    $\DiffSpin(X)$ is not in the identity component. Equivalently, the
    family of spin manifolds $(X^{\tau}, \s^{\tau})$ over the circle
    $B$ is not isomorphic to the trivial family $B\times (X,\s)$.
\end{proposition}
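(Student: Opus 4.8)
\textit{Proof proposal.} The plan is to prove the proposition by contradiction, extracting a contradiction from the non-triviality of the Bauer--Furuta invariant of $\KK$ in its families form. By the discussion preceding the statement, the assertion is equivalent to the vanishing of the map $e_{*}$ in \eqref{eq:e}, and hence to Proposition~\ref{cor:w2}; so suppose instead that $\tau$ \emph{is} in the identity component of $\DiffSpin(\KK)$. Choosing a path from the identity to $\tau$ and projecting to $\SDiff(\KK)$ produces a loop $\gamma$ whose image $e_{*}[\gamma]\in\Z/2$ is non-zero, and clutching along $\gamma$ yields a smooth fiber bundle $E\to S^{2}$ with fiber $\KK$ for which $w_{2}(T^{v}E)$ evaluates non-trivially on a section $\sigma$. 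The task is then to show that no such bundle exists.

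First I would set up the families Bauer--Furuta invariant for $E\to S^{2}$, using a fiberwise Riemannian metric and a fiberwise perturbation, together with a finite-dimensional approximation of the family of Seiberg--Witten maps over $S^{2}$; this produces a stable map between the Thom spectra of the families index bundle and the families self-dual-forms bundle over $S^{2}$, carrying the residual symmetry coming from the fiberwise gauge group (the circle, together with the charge-conjugation symmetry $j$ present because the relevant fiberwise $\mathrm{spin}^{c}$ structure has torsion first Chern class). Since $\KK$ has $b^{+}=3$, there is no wall-crossing, so this invariant is independent of the metric and perturbation and depends only on the isomorphism class of the family; for the trivial family $S^{2}\times\KK$ it reduces, up to suspension by $S^{2}_{+}$, to the ordinary Bauer--Furuta invariant of $\KK$.

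Next I would compute the invariant of $E\to S^{2}$ by a gluing argument over the two hemispheres, in the spirit of the connected-sum product theorem of \cite{Bauer-2} that we use later for Theorem~\ref{thm:Dehn}: over each closed disc $D\subset S^{2}$ the family is a product $D\times\KK$, so a relative/excision form of the invariant identifies the global invariant in terms of the ordinary Bauer--Furuta invariant $BF(\KK)$ and the clutching data along the equator. The point is that $BF(\KK)$ is non-zero: the Dirac operator on $\KK$ has complex index $-\sigma(\KK)/8=2$ while $b^{+}(\KK)=3$, so $\KK$ is the extremal case of Furuta's $10/8$-inequality and its $\mathrm{Pin}(2)$-equivariant Bauer--Furuta invariant is a non-trivial element of a small, explicitly known equivariant stable homotopy group. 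Consequently the families invariant of $E\to S^{2}$ is non-zero as well.

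The main obstacle is the final step: showing that the hypothesis $\langle w_{2}(T^{v}E),\sigma\rangle\ne 0$ is incompatible with this non-vanishing. Concretely, a non-trivial $w_{2}$ along the section forces a twist in the structure of the finite-dimensional models over $S^{2}$ — there is no fiberwise spin structure adapted to the section, and the families index and self-dual bundles over $S^{2}$, together with the action of the charge-conjugation symmetry $j$, are twisted in a way that confines the families invariant to a direct summand (or to a group) which, by the explicit computation of the ambient equivariant stable stem, cannot contain the image of $BF(\KK)$. This contradicts the conclusion of the previous paragraph, so $e_{*}=0$ and $\tau$ is not in the identity component. The care required to keep track of exactly how the deck-transformation twist propagates through the $\mathrm{Pin}(2)$-equivariant approximation — and to verify the gluing formula in the twisted setting — is where essentially all of the work lies; this is also the place where our treatment diverges slightly from that of \cite{Baraglia-Konno-2}.
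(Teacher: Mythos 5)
Your reduction of the statement to Proposition~\ref{cor:w2} is fine, but the two steps that would turn the contradiction scheme into a proof are not actually carried out, and each hides a real difficulty. First, the claim that the families invariant of $E\to S^{2}$ is non-zero because ``over each hemisphere the family is a product'' is question-begging: the family is trivial over each disc, so all of the information sits in the clutching along the equator, and that clutching is exactly the loop whose non-triviality you are trying to detect. A ``relative/excision'' identification of the global invariant ``in terms of $BF(\KK)$ and the clutching data'' is not a computation; the clutching data could perfectly well kill the invariant, and you give no mechanism forcing non-vanishing. Second, and more seriously, the decisive step --- that $\langle w_{2}(T^{v}E),\sigma\rangle\neq 0$ confines the invariant to a summand that ``cannot contain the image of $BF(\KK)$'' --- is a pure assertion. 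When $w_{2}(T^{v}E)$ is non-zero there is no fiberwise spin structure at all, so even the construction of the relevant families invariant requires the twisted $\mathit{Pin}(2)$-structure and the computation of the associated twisted equivariant stable homotopy groups; that construction and computation are precisely the content of the Baraglia--Konno argument you are gesturing at, and your proposal leaves them as a black box. As written, the proposal is an outline of their strategy with its core missing, not a proof.

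For contrast, the paper never passes to $S^{2}$ and never argues by contradiction. It works directly with the mapping torus of $\tau$ over the circle $B$, where the fiberwise spin structure $\s^{\tau}$ exists by construction, and computes the invariant of Definition~\ref{def:BF} outright (Proposition~\ref{prop:twist-calc}): since $\tau$ acts as $-1$ on spinors, the twisted family is the $\theta=\pi$ member of the isotopy $\sw_{\theta}$ obtained from the $e^{I\theta}$-equivariance of $\sw$, and the only issue is a comparison of stable framings --- the framing carried along this isotopy (left multiplication by $e^{I\theta}$) versus the quaternionic framing (right multiplication) --- which differ by the non-trivial loop in $\SO$ exactly when half the complex index of $D$ is odd. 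For $\KK$ this gives $\eta(X^{\tau},\s^{\tau})=\eta_{1}\mathbin{\scriptstyle\times}\eta(X,\s)=\eta_{1}^{2}\neq 0$ in $\pi^{s}_{2}=\Z/2$, while the trivial family has invariant zero because $B$ carries the bounding framing; non-triviality of $\tau$ follows at once. That explicit framing comparison (or some equally concrete substitute, such as the full Baraglia--Konno twisted $\mathit{Pin}(2)$ analysis) is what your argument would need to supply.
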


It follows from the discussion that this proposition is an equivalent
reformulation of both Propositions~\ref{cor:DehnBoundary} and
\ref{cor:w2}. Indeed, the double cover $\DiffSpin(X)\to\SDiff(X)$ is
classified by a map $\pi_{1}(\SDiff(X)) \to \Z/2$ which is easily
identified with the map $e_{*}$ in
\eqref{eq:e}. Proposition~\ref{prop:tau} says that this double cover
is trivial for $\KK$, which is equivalent therefore to the vanishing
of $e$. We will prove Proposition~\ref{prop:tau} in section
\ref{sec:twist-calc}, after introducing the tools from Seiberg-Witten
theory in section~\ref{sec:BF}.

\section{Bauer-Furuta invariants for spin families}
\label{sec:BF}

To fix our conventions and context, we summarize in this section the
techniques of finite-dimensional approximation, as applied to the
Seiberg-Witten equations on 4-manifolds, first for a single 4-manifold
as in \cite{Furuta, Bauer-Furuta-1}, and then for families of
4-manifolds over a base, as developed and explored first in
\cite{MingXu}, and later in \cite{Szymik,Baraglia-2,
  Baraglia-Konno-3}, for example.  We focus on the case that $X$ is
equipped with a spin structure, rather than a more general $\spinc$
structure, and we assume that $b_{1}(X)=0$.

So let $X$ be a closed, oriented 4-manifold with $b_{1}=0$. Let $\s$
be a spin structure on $X$. After equipping with the manifold with a
Riemannian metric, the spin structure gives rise to spin bundles
$S^{+}, S^{-}$ over $X$, Clifford multiplication
$\gamma: \Lambda^{1}\otimes S^{+}\to S^{-}$, and the Dirac operator
$D:\Gamma(S^{+})\to \Gamma(S^{-})$. The Seiberg-Witten map is a
non-linear Fredholm map between Hilbert spaces,
\[         
    \SW : \cW^{+} \to \cW^{-}.
\]
Unwrapping this a bit, we have
\[
    \cW^{+} = \cV^{+} \oplus \cU^{+} , \qquad \cW^{-} = \cV^{-} \oplus
    \cU^{-},
\]
where $\cV^{\pm}$ are suitable Sobolev completions of
$\Gamma(S^{\pm})$, and $\cU^{+}$, $\cU^{-}$ are Sobolev completions of
respectively $\Omega^{1}(X)$ and
$\Omega^{+}(X) \oplus \Omega^{0}(X)/\R$. The Seiberg-Witten map has
the form $\SW=l + c$, where $l$ is the Fredholm operator
\[
    l = D \oplus (d^{+}, d^{*})
\]
and $c$ has the form
\[
    c(a, \phi) = \bigl(\gamma(ia, \phi),\; q(\phi, \bar\phi)\bigr)
\]
where $q$ is a bilinear term.

Now let $W^{-}\subset \cW^{-}$ be a finite-dimensional subspace, large
enough that $W^{-} + \im(l) =\cW^{-}$. Set $W^{+} =
l^{-1}(W^{-})$. The corresponding \emph{finite-dimensional
approximation} to the Seiberg-Witten map is constructed in
\cite{Bauer-Furuta-1} as the map
\[
                \sw  = (\rho \comp \SW) : W^{+} \to W^{-},
\]
where
$\rho : \cW^{-} \setminus S \left ((W^{-})^{\bot}\right) \to W^{-}$ is
a suitable retraction. It is shown in \cite{Bauer-Furuta-1} that if
$W^{-}$ is sufficiently large, then the image of $\SW$ does not
intersect the unit sphere $S((W^{-})^{\bot})$ in the orthogonal
complement, so the composite $\rho\comp\SW$ is indeed defined. The
finite-dimensional approximation is a proper map and extends to the
one-point compactifications as a map of spheres:
\begin{equation}\label{eq:sw-spheres}
       [\sw] \in [ W^{+}_{\infty}, W^{-}_{\infty}]. 
\end{equation}

We will take $W^{-}$ always of the form $V^{-}\oplus U^{-}$, in which
case also $W^{+} = V^{+}\oplus U^{+}$. Furthermore, $\cV^{\pm}$ are
quaternion vector spaces and $D$ is quaternion-linear, and we are
therefore able to insist also that $V^{\pm}$ are quaternion vector
subspaces.

The operator $(d^{+} + d^{*})$ is injective with cokernel $H^{+}(X)$,
the space of harmonic self-dual 2-forms. We may choose $U^{-}$ to
contain this space, so that
\[
\begin{aligned}
    U^{-} &= u^{-} \oplus H^{+} \\
\end{aligned}
\]
and $l : U^{+} \to u^{-}$ is a linear isomorphism. A choice of
orientation of $H^{+}$ then allows us to identify the orientation
lines of $U^{+}$ and $U^{-}$. The vector spaces $V^{\pm}$ are
naturally oriented themselves, because they are quaternion vector
spaces. In all then, the orientation of $H^{+}$ allows us to identify
the orientation lines of $W^{\pm}$.

Let $M$ be a regular fiber of the finite-dimensional approximation
$\sw$, over a point in $p \in W^{-}$. As the fiber of
map between relatively oriented vector spaces, $M$ is naturally a
stably framed manifold. The results of \cite{Bauer-Furuta-1} imply
that, provided $W^{-}$ is sufficiently large, the fiber $M$ is
compact, and its framed cobordism class depends only on $X$ and the
orientation of $H^{+}$, not on the choice of Riemannian metric or on
the choice of $W^{-}$. Here ``sufficiently large'' means only that
$W^{-}$ should contain a subspace $W^{-}_{*}(g)$ which depends on the
metric $g$. The dimension of $M$ is
\[
           d = -\frac{1}{4}(2 e + 3\sigma)  + 1,
\]
where $e$ and $\sigma$ are the Euler number and signature of $X$.

Framed cobordism classes of $d$-manifolds are classified by the stable
$d$-stem $\pi^{s}_{d}= \pi_{d+N}(S^{N})$ for $N$ large. Instead of
referring to the regular fiber of $\sw$, we can refer directly to the
homotopy class of the map $\sw$ itself, as a map between spheres as in
\eqref{eq:sw-spheres}. For the purposes of this paper, we use the
following stripped-down version of the Bauer-Furuta invariant:

\begin{definition}
    The Bauer-Furuta invariant of the spin manifold $(X,\s)$, with the chosen
    orientation of $H^{+}$, is the framed cobordism class $\eta(X)$ of
    the regular fiber of $\sw$, for any metric $g$ and any choice of $W^{-}$
    containing $W^{-}_{*}(g)$.
\end{definition}

Now suppose instead of a single 4-manifold we have a smooth fiber
bundle $X \to B$ over a compact base. Let $\s$ be a given fiberwise
spin structure, and let a family of Riemannian metrics be given. The
spaces $\cW^{+}$ and $\cW^{-}$ are now bundles over $B$, and $\SW$ is
a bundle map. After choosing a suitable finite-rank subbundle $W^{-}\subset
\cW^{-}$,  we have finite-dimensional approximations $\sw :
W^{+}\to W^{-}$, where $W^{+} = V^{+} \oplus U^{+}$ and $W^{-} = V^{-}
\oplus U^{-}$ are finite-rank vector bundles over $B$. This
construction is defined whenever $W^{-}$ is sufficiently large, which
can be taken to mean that $W^{-}$ contains a certain subbundle
$W^{-}_{*}$ depending on the metric. The map $\sw$ is proper and
therefore extends to the fiberwise one-point compactifications: it
becomes a map of based sphere bundles, with a homotopy class
\[
         [\sw] \in [ W^{+}_{\infty}, W^{-}_{\infty}].
\]

Let $s$ be a smooth section of $W^{-}\to B$, transverse to $\sw$. The
inverse image $\sw^{-1}(s)$ is then a compact manifold $M$ with a map
to $B$. The dimension of $M$ is $\dim(B) + d$, where $d$ is the
invariant of the $4$-dimensional fiber, as above. 

To go further, we impose extra conditions to ensure that we can stably
trivialize the bundles $W^{\pm}$ canonically. First, the bundles
$V^{\pm}$ are quaternionic, and $\mathrm{Sp}(N)$ is 2-connected. So if
we require that $\dim B \le 2$, then these bundles have preferred
trivializations. To trivialize $U^{+} \ominus U^{-}$ stably is again
equivalent to trivializing $H^{+}$ viewed now as a bundle over
$B$. There is an action of $\pi_{1}(B)$ on the homology $H^{2}(X_{b})$
of the fiber $X_{b}$. If we impose the condition that this action is
trivial, then $H^{+}$ is a maximal positive-definite subbundle of the
trivial vector bundle $H^{2}(X_{b};\R)$, and it therefore has a
preferred trivialization as $B \times H^{+}(X_{b})$ because the space
of maximal positive-definite subspaces of $H^{2}(X_{b};\R)$ is a
contractible subset of the Grassmannian. An orientation of $H^{+}$ for
any fiber therefore completely determines a trivialization.

At this point, the map $p: M \to B$ has a relative stable framing: a
stable trivialization of $TM \ominus TB$. It is convenient in the
exposition to make $M$ itself stably framed, and to do so we ask that
$B$ have a stable framing of its tangent bundle. We are specifically
interested in the case that $B$ is the circle equipped with the stable
framing which \emph{bounds the framed disk}. Recall from the
introduction, that if $(X,\s)$ is a spin 4-manifold and $h\in \DiffSpin(X,\s)$,
then we can construct a spin family $(X^{j}, \s^{h})$
over the base $B=S^{1}$ as the
mapping torus of $h$. The Bauer-Furuta construction 
now produces a framed manifold $M(X^{h},\s^{h})$. We summarize this
as follows.

\begin{definition}\label{def:BF}
    Let $(X,\s)$ be a closed, oriented spin 4-manifold with $b_{1}=0$ and let
    $h\in\DiffSpin(X,\s)$ be an element which acts trivially on
    $H_{2}(X)$. Equip the circle $B$ with the bounding stable framing
    and let $M$ be the resulting framed manifold of dimension $1+d$,
    defined as $M  = \sw^{-1}(s)$, for a generic section $s$ of
    $W^{-}$. The Bauer-Furuta invariant $\eta(X^{h},\s^{h})$ of $h$ is the framed cobordism
    class of $M$, or equivalently the corresponding element of $\pi^{s}_{d+1}$.
\end{definition}

Our choice to give $B$ the framing which bounds means that the
invariant $\eta$ is zero for the trivial product family over $B$. So
if $\eta(X^{h}, \s^{h})$ is non-zero, then $h$ is not in the identity
component of $\DiffSpin(X,\s)$.

\begin{remark}
    As the discussion makes clear, the construction in this form
    applies equally well if $B$ is, for example, a 2-sphere. Thus, if
    $(X,\s)$  is a spin manifold with $b_{1}=0$, then the construction
    provides a homomorphism $\pi_{1}(\DiffSpin(X,\s)) \to
    \pi^{s}_{d+2}$.  
\end{remark}    

\section{Calculation for the twisted $\KK$ family}
\label{sec:twist-calc}

Let $X$ again be a closed, oriented 4-manifold with $b_{1}=0$,
equipped with a spin structure $\s$. Let $\tau\in \DiffSpin(X,\s)$ be
the deck transformation of $\s$, the generator of the kernel of
$\DiffSpin(X,\s)\to \SDiff(X)$. We can form the spin family $(X^{\tau},
\s^{\tau})$ over $B$, and the invariant $\eta(X^{\tau},
\s^{\tau})$. 

\begin{proposition}\label{prop:twist-calc}
    If the signature of $X$ is equal to $16$ mod $32$
     (i.e.~if the complex index of $D$ is $2$ mod $4$), then the
     Bauer-Furuta invariant of the twisted spin family $(X^{\tau},
     \s^{\tau})$ over the circle $B$ is given by a product
    \[
        \eta(X^{\tau},\s^{\tau}) = \eta_{1} \mathbin {\scriptstyle
          \times} \eta(X,\s),
     \]
     as a  cobordism class of stably framed $(d+1)$-manifolds. Here
     $\eta_{1}$ is the non-trivial element of
    $\pi^{s}_{1}$, represented by the circle with Lie-group framing, and
    $\eta(X,\s)$ is the Bauer-Furuta invariant of $(X,\s)$, as a
    stably framed $d$-manifold. If the signature of $X$ is equal to
    $0$ mod $32$, then $\eta(X^{\tau}, \s^{\tau})$ is zero.         
\end{proposition}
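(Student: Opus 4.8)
The plan is to trivialize the family finite‑dimensional approximation of $(X^{\tau},\s^{\tau})$ over $B=S^{1}$ in two different ways and compare the induced stable framings. Since $\tau$ covers the identity map of $X$, the total space $X^{\tau}$ is $S^{1}\times X$, and over the interval the family Seiberg--Witten map is literally the product $S^{1}\times\sw_{X}$; all the twisting sits in the gluing at the two ends, which is $-\mathrm{Id}$ on the spinor summands $V^{\pm}$ and the identity on the form summands $U^{\pm}$. Thus $\sw^{\tau}$ is the mapping torus of the pair $(\sw_{X},\iota)$, where $\iota$ acts by $-1$ on $V^{\pm}$ and by $+1$ on $U^{\pm}$, and the equivariance $\iota\comp\sw_{X}=\sw_{X}\comp\iota$ that makes this a bundle map holds because $D$ is quaternion‑linear, the Clifford term $\gamma(ia,\phi)$ is odd in $\phi$ and lands in $S^{-}$, and the quadratic term $q(\phi,\bar\phi)$ is even in $\phi$ and lands in $i\Lambda^{+}$.

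The key point is that $\sw_{X}$ is in fact equivariant for the whole circle action $\phi\mapsto e^{i\alpha}\phi$ of constant phase rotations of the spinors (acting trivially on the forms) --- this is precisely the symmetry under which $q$ is invariant and under which $D$ and Clifford multiplication are complex‑linear. As $e^{i\pi}=-1$ is the deck transformation, the path $\alpha\in[0,\pi]$ in this circle trivializes $V^{\pm}$ over $S^{1}$, and in that trivialization $\sw^{\tau}$ becomes the \emph{constant} family $S^{1}\times\sw_{X}$; with these trivializations the associated framed manifold is $S^{1}\times M(X,\s)$ with the product of the bounding framing of $S^{1}$ and the framing of $M(X,\s)$, hence null‑cobordant. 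The invariant $\eta$ is, however, defined using the \emph{quaternionic} trivialization of $V^{\pm}$, i.e.\ the one coming from a path from $\mathrm{Id}$ to $-\mathrm{Id}$ in $\mathrm{Sp}(V^{\pm})$ (canonical because $\mathrm{Sp}$ is simply connected). The constant‑phase action is \emph{not} quaternion‑linear, so on each quaternionic coordinate line the difference between the quaternionic path and the constant‑phase path is the conjugation loop $\alpha\mapsto\mathrm{Ad}_{e^{i\alpha}}$, which is a full $2\pi$‑rotation of a $2$‑plane of $\R^{4}$ and hence the generator of $\pi_{1}(\SO)$. Therefore, writing $N^{\pm}$ for the quaternionic dimension of $V^{\pm}$ and using that $V^{+}$ enters the framing of $M$ positively and $V^{-}$ negatively, the net change of the stable framing of $M$ is a loop $\delta\colon S^{1}\to\SO$ whose class in $\pi_{1}(\SO)=\Z/2$ is $N^{+}-N^{-}=\tfrac12\,\mathrm{ind}_{\C}D$.

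To conclude I would invoke the standard fact that re‑framing a framed manifold of the form $S^{1}\times N$ along a loop $\delta$ pulled back from the $S^{1}$ factor changes its framed cobordism class by $J[\delta]\mathbin{\scriptstyle\times}[N]$, where $J\colon\pi_{1}(\SO)\xrightarrow{\sim}\pi^{s}_{1}$ is the $J$‑homomorphism, sending the generator to $\eta_{1}$. Applied with $N=M(X,\s)$ this yields
\[
   \eta(X^{\tau},\s^{\tau})\;=\;J[\delta]\mathbin{\scriptstyle\times}\eta(X,\s),
\]
and $J[\delta]=\eta_{1}$ precisely when $\tfrac12\,\mathrm{ind}_{\C}D$ is odd, i.e.\ when the complex index of $D$ is $2$ mod $4$, equivalently $\sigma\equiv16$ mod $32$; in the remaining case $\sigma\equiv0$ mod $32$ the class is $0$. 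This is the assertion. The step requiring the most care is the second paragraph: one must keep the circle of \emph{constant} phase rotations (under which $\sw_{X}$ is equivariant, but which does not act quaternion‑linearly) rigorously separate from a genuine path in $\mathrm{Sp}(V^{\pm})$ (which does, and which supplies the framing built into the definition of $\eta$), and identify the class of their difference loop so that the parity of $\mathrm{ind}_{\C}D$ enters exactly as stated.
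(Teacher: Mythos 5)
Your proposal is correct and follows essentially the same route as the paper: both compare the trivialization of $V^{\pm}$ over the circle coming from the phase-rotation (circle-action) isotopy, which makes the twisted family look like the product, with the quaternionic trivialization built into the definition of $\eta$, identify the difference as a conjugation-type loop in $\SO$ whose class is $\tfrac12\ind_{\C}D$ mod $2$ (the paper realizes the quaternionic trivialization concretely via right-multiplication by $e^{I\theta}$, you via simple-connectivity of $\mathrm{Sp}$), and conclude that the invariant is $\eta_{1}\mathbin{\scriptstyle\times}\eta(X,\s)$ or $0$ accordingly. Your explicit appeal to the $J$-homomorphism is just a restatement of the paper's final identification of $M_{\tau}$ with $L\times M$, $L$ the nontrivially framed circle.
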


\begin{corollary}
    If $X$ is a $\KK$ surface, then $\eta(X^{\tau},\s^{\tau})$ is non-zero.
\end{corollary}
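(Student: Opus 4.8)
The plan is to feed $X=\KK$ into Proposition~\ref{prop:twist-calc} and then invoke the Bauer--Furuta computation for a single $\KK$ surface. First I would record the numerology. For $\KK$ we have Euler number $e=24$ and signature $\sigma=-16$, so $\sigma\equiv 16\pmod{32}$ (equivalently the complex index of $D$ is $-\sigma/8=2\equiv 2\pmod 4$); thus the first case of Proposition~\ref{prop:twist-calc} applies and gives
\[
  \eta(X^{\tau},\s^{\tau}) \;=\; \eta_{1}\mathbin{\scriptstyle\times}\eta(\KK,\s).
\]
Moreover $d=-(2e+3\sigma)/4+1=1$, so that $\eta(\KK,\s)\in\pi^{s}_{1}\cong\Z/2$ and $\eta(X^{\tau},\s^{\tau})\in\pi^{s}_{2}\cong\Z/2$. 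The external product $\pi^{s}_{1}\otimes\pi^{s}_{1}\to\pi^{s}_{2}$ carries $\eta\otimes\eta$ to $\eta^{2}$, the nonzero element of $\pi^{s}_{2}$. Hence the Corollary is equivalent to the statement that $\eta(\KK,\s)$ is the \emph{nonzero} class in $\pi^{s}_{1}$, i.e.\ that the Bauer--Furuta invariant of the $\KK$ surface is the Hopf element $\eta_{1}$.

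To see $\eta(\KK,\s)\neq 0$, the approach is to identify the framed $1$-manifold $M=\sw^{-1}(s)$ of Definition~\ref{def:BF} explicitly. Since $b^{+}(\KK)=3>0$, a generic section $s$ misses the reducible locus, so $M$ is a disjoint union of circles, each a free orbit of the residual circle of constant gauge transformations acting on the spinor factor $V^{+}$ by scalars. Dividing by this circle identifies the set of orbits with the ordinary Seiberg--Witten moduli space of the $\spinc$ structure underlying $\s$; for $\KK$ this moduli space has odd cardinality, its signed count being the Seiberg--Witten invariant $\pm 1$. On each such orbit $\theta\mapsto e^{i\theta}\phi_{0}$ the tangent direction $\theta\mapsto ie^{i\theta}\phi_{0}$ winds once positively in the complex (indeed quaternionic) trivialization of $V^{+}$ used in the construction, so one checks that the framing inherited by $M$ restricts on the orbit to the Lie-group framing, whose framed cobordism class is $\eta_{1}$. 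Therefore $[M]=(\#\mathcal{M}\bmod 2)\cdot\eta_{1}=\eta_{1}\neq 0$. This non-vanishing is precisely the Bauer--Furuta computation of the invariant of $\KK$ \cite{Bauer-Furuta-1,Bauer-2}, which may alternatively be quoted directly.

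The main obstacle is this last step: confirming that $\eta(\KK,\s)$ is genuinely the generator rather than zero. Concretely one must (i) pin down the orbit-framing identification above --- that a free $S^{1}$-orbit in $\sw^{-1}(s)$ carries the Lie-group framing once $V^{+}$ is given its canonical quaternionic trivialization, and that the relative orientations built into Definition~\ref{def:BF} do not affect the parity (they cannot, since there is a single orbit); and (ii) use the classical fact that the Seiberg--Witten invariant of $\KK$ in the unique $\spinc$ structure with torsion $c_{1}$ is odd. Granting these, Proposition~\ref{prop:twist-calc} gives $\eta(X^{\tau},\s^{\tau})=\eta_{1}\mathbin{\scriptstyle\times}\eta_{1}=\eta_{1}^{2}\neq 0$ in $\pi^{s}_{2}$, which is the assertion of the Corollary.
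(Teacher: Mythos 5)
Your proposal is correct and is essentially the paper's argument: check $\sigma(\KK)\equiv 16 \pmod{32}$, apply Proposition~\ref{prop:twist-calc}, and use the known computation $\eta(\KK,\s)=\eta_{1}$ together with $\eta_{1}\mathbin{\scriptstyle\times}\eta_{1}\neq 0$ in $\pi^{s}_{2}=\Z/2$ --- the paper simply quotes the $\KK$ computation (cf.\ its remark citing \cite{Morgan-Szabo,Bauer-Furuta-1}) rather than rederiving it. Your supplementary sketch that $\eta(\KK,\s)=\eta_{1}$ via the Seiberg--Witten moduli space glosses over the point that a truly generic value of $\sw$ is not $S^{1}$-invariant (so its fiber is not literally a union of free orbits without a further perturbation/transversality argument), but since you also note the result may be quoted directly from \cite{Bauer-Furuta-1}, this does not affect the proof.
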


\begin{proof}[Proof of the Corollary]
    The Bauer-Furuta invariant of $\KK$ with its unique spin structure
    $\s$ is the class $\eta_{1}$, represented by the Lie-framed
    circle. The signature of $\KK$ is 16, so the proposition above
    tells us that the invariant of the family $(X^{\tau}, \s^{\tau})$
    is $\eta_{1}\times\eta_{1}$. This is the
    generator of the stable $2$-stem, $\pi^{s}_{2}=\Z/2$.
\end{proof}

Proposition~\ref{prop:tau} follows directly from this result, as do
the reformulations, Propositions~\ref{cor:DehnBoundary} and \ref{cor:w2}.

\begin{proof}[Proof of Proposition~\ref{prop:twist-calc}]
    Let \[ \sw : W^{+} \to W^{-} \] be the finite-dimensional
    approximation of the Seiberg-Witten map for the spin manifold $X$
    itself, equipped with some metric $g$. As usual we write $W^{+} =
    V^{+}\oplus U^{+}$ and similarly with $W^{-}$. The vector spaces
    $V^{\pm}$ are quaternion vector spaces, and we adopt the
    convention that the quaternion scalars $I$, $J$, $K$ act on the
    left. The circle group acts on $V^{+}$ and
    $V^{-}$ by left-multiplication by $e^{I\theta}$. We extend this
    action to all of $W^{\pm}$ by making the action trivial on
    $U^{\pm}$. The
    Seiberg-Witten map commutes with this circle action, as does its
    finite-dimensional approximation.

    Over the interval $[0,1]$, form the trivial product bundles
    $[0,1]\times W^{\pm}$. For any $\theta\in [0,\pi]$,
     let $W^{+}_{\theta} \to B$ be the vector bundle obtained by
     identifying $\{1\}\times W^{+}$ with $\{0\}\times W^{+}$ using
     left-multiplication by $e^{I\theta}$:
     \[
         \{1\}\times W^{+}\; \stackrel{e^{I\theta} \times
           \,\mathord{\cdot}}{\longrightarrow}\;\{0\}\times W^{+}.
     \]
     Define $W^{-}_{\theta}\to B$ similarly.

     The map $\sw$ commutes with $e^{I\theta}$, so it gives rise to a
     bundle map over $B$, for each $\theta$,
     \[
              \sw_{\theta} : W^{+}_{\theta} \to W^{-}_{\theta}.
          \]
      When $\theta=0$, this is (the finite-dimensional approximation to)
      the   Seiberg-Witten map for the trivial family $B\times (X,\s)$
      over the circle. When $\theta=\pi$ this is the Seiberg-Witten
      map for the twisted family $(X^{\tau}, \s^{\tau})$, because the
      involution $\tau$ acts as $-1$ on the spin bundles $S^{\pm}$.

      We now wish to compare the two proper bundle maps
      \[
          \begin{aligned}
              \sw_{0} : W^{+}_{0} &\to W^{-}_{0} \\
              \sw_{\pi} : W^{+}_{\pi} &\to W^{-}_{\pi} .
          \end{aligned}
           \]
      On the one hand we have a proper isotopy between them, given by
      the bundle maps
      \begin{equation}\label{eq:swtheta}
          \sw_{\theta} : W^{+}_{\theta}\to
          W^{-}_{\theta},
      \end{equation}
      for $\theta\in
      [0,\pi]$. However, multipliction on the left by $e^{I\theta}$ is
      not a quaternion-linear transformation for intermediate values
      $\theta\in(0,\pi)$, so we do not have an isotopy through a
      family of quaternion vector bundles $V^{\pm}_{\theta}$. The
      trivializations of $V^{\pm}_{\pi}$ arising from their
      quaternionic structure may be different then the trivializations
      they aquire from the trivial bundles $V^{\pm}_{0}$ via this
      isotopy.

      To compare the trivializations, we construct a different isotopy
      over $[0,\pi]\times B$ between the vector bundles $V^{\pm}_{0}$
      and $V^{\pm}_{\pi}$. To do so we trivialize the fiber $V^{+}$ of
      the trivial bundle $V^{+}_{0}$ as $\HH^{n}$. We construct a vector
      bundle $\hat W^{+}_{\theta}$ over
      $B$, for $\theta\in [0,\pi]$, in just the same was as we defined
      $W^{+}_{\theta}$ before, but now using right-multiplication by
      $e^{I\theta}$ on $\HH^{n}$ instead of left-multiplication. We do
      the same with $\hat W^{-}_{\theta}$. The
      Seiberg-Witten map does not commute with this action, so does
      not define a bundle map; but we are concerned only with the
      trivializations. Because right-multiplication is quaternion
      linear, we now have vector bundles $\hat
      W^{\pm}_{\theta}$, providing an isotopy over $[0,\pi]\times B$
      between $W^{\pm}_{0}$ and $W^{\pm}_{\theta}$, and the ``$V$''
      summands of these are quaternion vector bundles. The
      trivialization of $W^{\pm}_{\theta}$ that we are required to use
      is the one that arises from the trivial bundle via this new
      isotopy.

      To compare the trivializations that arise via these two
      different isotopies, consider composing the first with the
      second. On the $\HH^{n}$ summand corresponding to $V^{+}$, we have an isotopy over
      $[0,2\pi]\times B$ from the trivial bundle over $B$ with fiber
      $\R^{4n}$ back to itself. The total bundle over $[0,2\pi]\times
      B$ is constructed from the trivial bundle over $[0,2\pi] \times
      [0,1]$ by identifying the fibers using a certain path
      $[0,2\pi]\to \SO(4n)$. This path is the concatenation of a first
      path
      from $1$ to $-1$ given by left multiplication by $e^{I\theta}$
      with a second path from $-1$ to $1$ given by right
      multiplication by $e^{I\theta}$. In the case $n=1$, such a path
      from $1$ to $1$ belongs to the non-trivial homotopy class in
      $\pi_{1}(\SO(4))$. In general it is non-trivial in
      $\pi_{1}(\SO(4n))$ if and only if $n$ is odd.

      The same arguments apply to $W^{-}$ as well, so when we consider
      a relative framing of $W^{+}_{\pi} \ominus W^{-}_{\pi}$, we see
      that the stable framing acquired from the trivial bundle
      by the isotopy \eqref{eq:swtheta} is equal to the quaternionic
      framing if
      \[
            \dim_{\HH} V^{+} - \dim_{\HH} V^{-}
      \]    
      is even. Otherwise, the framings differ by the non-trivial map
      $B\to \SO$. (This difference is half the complex index of $D$.)

      In terms of the stably framed manifold $M_{\tau}$ representing the
      Bauer-Furuta invariant of the twisted family $(X^{\tau},
      \s^{\tau})$, the conclusion is that $M_{\tau}$ is
      framed-cobordant to the product $B\times M$ (and therefore to
      $\emptyset$) if the index of $D$ is $0$ mod $4$, and is
      framed-cobordant to $L\times M$ if the index is $2$ mod $4$,
      where $L$ is the circle with non-zero framing.
\end{proof}

\section{Connected sums}
\label{sec:sums}

Let $(X, \s_{X})$ and $(Y, \s_{Y})$ be two spin 4-manifolds.  We
suppose that both have $b_{1}=0$ so that our exposition of the
Bauer-Furuta invariants applies. Remove
standard balls from each and identify collars of the boundary $3$-spheres by a
diffeomorphism $\psi$ so as to form the oriented connected sum $X
\csum_{0} Y$. We write $C_{X}$ and $C_{Y}$ for these (closed) collars
and
\[
          \psi : C_{X } \to C_{Y}
      \]
for the diffeomorphism.      
Let $\alpha: [0,1]\to \SO(4)$ be a closed loop in the
non-trivial homotopy class. For each $t\in [0,1]$ we can form a
connected sum $X \csum_{t} Y$, using $\psi\comp \alpha(t)$ in place of
$\psi$. The 4-manifolds $X\csum_{0} Y$ and $X \csum_{1} Y$ are
identical, so we may form a family of 4-manifolds over the circle
$B=[0,1]/~$, which we write as
\begin{equation}\label{eq:sum-XY}
         p:   X \csum_{\alpha} Y \to B.
\end{equation}    

The derivative of $\psi$ identifies the frame bundles of the two
collars. Lift this to an isomorphism $\tilde\psi$ of the spin bundles
$\s_{X}|_{C_{X}} \to \s_{Y}|_{C_{Y}}$. There are two such lifts:
$\tilde\psi$ and $\tilde\psi\comp \tau$, where $\tau$ is the deck
transformation: choose one of them. Thought of as a path in $\Diff(C_{X})$, there is a
unique lifting of $\alpha$ to a path $\tilde\alpha$ in $\DiffSpin(C_{X}, \s_{X})$
starting at the identity. Because the homotopy class of $\alpha$ in
$\SO(4)$ is non-trivial, its lift is not a closed loop and we have
$\tilde\alpha(1)=\tau$. So if we equip $B \times X$ with the twisted
family of
spin structures $\s^{\tau}$ and equip $B\times Y$ with the product
family, then we can form a spin family
\begin{equation}\label{eq:sum-XY-spin}
            \left (X^{\tau} \csum_{\alpha} Y,\; \s_{X}^{\tau}
            \csum_{\tilde\alpha} \s_{Y} \right).
\end{equation}    
We could also do the twist $\tau$ on the side of $Y$ rather than
$X$.

\begin{proposition}
     For the family of 4-manifolds \eqref{eq:sum-XY} over the circle $B$
    with fiber $X\csum_{0}Y$, equipped with the family of spin
    structures \eqref{eq:sum-XY-spin}, the Bauer-Furuta invariant
    $\eta$ is given by
    \[
        \eta_{1} \mathbin{\scriptstyle \times} \eta(X,\s_{X})
        \mathbin{\scriptstyle \times} \eta(Y,\s_{Y})
    \]
    if $\sigma(X)$ is $16$ mod $32$, and zero otherwise.        
\end{proposition}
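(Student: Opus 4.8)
The plan is to prove the formula by combining Proposition~\ref{prop:twist-calc} with a family version of Bauer's connected-sum theorem \cite{Bauer-2}. The point is that the twisting loop $\alpha$, together with its spin lift $\tilde\alpha$, is supported in the neck of the connected sum, whereas in the limit of a long neck the Seiberg-Witten solutions on the fibers concentrate on the two summands, away from the neck. Consequently the finite-dimensional approximation of the family \eqref{eq:sum-XY} degenerates into a fiberwise smash product of the approximation for the twisted family $(X^{\tau},\s_{X}^{\tau})$ over $B$ with the approximation for the single spin manifold $(Y,\s_{Y})$; the twist enters the answer only through its endpoint $\tilde\alpha(1)=\tau$, which is exactly the monodromy of the spin family on the $X$-side.

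In more detail, I would set up the finite-dimensional approximation $\sw : W^{+}\to W^{-}$ over $B$ using a family of metrics in which the collar $C_{X}\cong C_{Y}$ is replaced by a long cylinder $[-T, T]\times S^{3}$, with $\alpha(t)$ and its spin lift acting on the $S^{3}$-factor only over a fixed bounded sub-cylinder. Following the neck-stretching analysis of \cite{Bauer-2}, as $T\to\infty$ the Fredholm operator $l = D\oplus(d^{+}, d^{*})$ converges, after a small perturbation concentrated on the neck, to the direct sum of the corresponding operators for the closed manifolds $X$ and $Y$; in particular $H^{+}(X\csum_{0}Y) = H^{+}(X)\oplus H^{+}(Y)$ and the complex index of the Dirac operator is additive. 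The finite-rank subbundles $W^{\pm}\to B$ can therefore be chosen so as to split in the limit as $W_{X}^{\pm}\oplus W_{Y}^{\pm}$, and the nonlinear term $c$, being pointwise, respects this splitting. Thus $\sw$ is properly homotopic over $B$ to a fiberwise smash $\sw_{X}\wedge\sw_{Y}$, where $\sw_{X}$ is the approximation for $(X^{\tau},\s_{X}^{\tau})$ over $B$ (the $X$-summand carries the twisted spin family, with monodromy $\tau$, while the Dehn twist itself is the identity on $X\setminus(\text{neck})$) and $\sw_{Y}$ is the approximation for $(Y,\s_{Y})$ regarded as a constant family over $B$.

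Taking the regular fiber $M = \sw^{-1}(s)$ of a generic section $s$ of $W^{-}$ then exhibits $M$, up to framed cobordism, as a product $M_{X}\times M_{Y}$ of framed manifolds: the quaternionic framings of the $V^{\pm}$-summands multiply, the canonical $H^{+}$-trivializations combine through the orthogonal splitting of $H^{+}$, and the bounding stable framing of $B$ is carried by the $X$-factor alone. Hence
\[
  \eta\bigl(X^{\tau}\csum_{\alpha}Y,\ \s_{X}^{\tau}\csum_{\tilde\alpha}\s_{Y}\bigr) \;=\; \eta(X^{\tau},\s_{X}^{\tau})\mathbin{\scriptstyle \times}\eta(Y,\s_{Y})
\]
in $\pi^{s}_{d_{X}+d_{Y}+1}$. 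Now invoke Proposition~\ref{prop:twist-calc}: when $\sigma(X)$ is $16$ mod $32$ we have $\eta(X^{\tau},\s_{X}^{\tau}) = \eta_{1}\mathbin{\scriptstyle \times}\eta(X,\s_{X})$, which gives the stated product $\eta_{1}\mathbin{\scriptstyle \times}\eta(X,\s_{X})\mathbin{\scriptstyle \times}\eta(Y,\s_{Y})$; and since $X$ is spin, Rokhlin's theorem leaves only the case $\sigma(X)$ equal to $0$ mod $32$, where $\eta(X^{\tau},\s_{X}^{\tau}) = 0$ and the product vanishes.

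The one step that is not formal is the family-parametrized gluing theorem used above: one must check that Bauer's exponential-decay estimates and the convergence of the linear and nonlinear parts of the Seiberg-Witten map hold uniformly over the compact base $B$, that the finite-dimensional models can be chosen to split in the $T\to\infty$ limit compatibly with the bundle structure over $B$, and that the twist, localized on the neck, really does drop out of the limiting model except through the monodromy $\tau$ on the $X$-side. The remainder -- the orientation and framing bookkeeping (in particular that the chosen orientation of $H^{+}(X\csum Y)$ is the product orientation, and that the bounding framing of the circle is counted exactly once) and the passage from the smash formula to the product of framed cobordism classes -- is routine given \cite{Bauer-2} and Proposition~\ref{prop:twist-calc}.
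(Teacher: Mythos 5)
Your overall frame agrees with the paper's: you rephrase the statement via Proposition~\ref{prop:twist-calc} as $\eta(X^{\tau}\csum_{\alpha}Y)=\eta(X^{\tau})\mathbin{\scriptstyle\times}\eta(Y)$, and the Rokhlin dichotomy ($\sigma(X)\equiv 0$ or $16$ mod $32$) is handled correctly. The gap is in how you propose to prove the families product formula, which is the entire content of the proposition. You appeal to ``the neck-stretching analysis of \cite{Bauer-2}'' and to ``Bauer's exponential-decay estimates,'' but \cite{Bauer-2} contains no long-neck limit: Bauer's proof of the connected-sum formula is designed precisely to avoid stretching. He fixes the metric, takes the disjoint union with fibers $(X\csum S^{4})\sqcup(S^{4}\csum Y)\sqcup(S^{4}\csum S^{4})$, and constructs explicit homotopies showing that regluing the necks by an even (here cyclic) permutation does not change the proper homotopy class of the finite-dimensional approximation. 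Your central assertion --- that as $T\to\infty$ the finite-rank bundles $W^{\pm}$ over $B$ ``can be chosen so as to split'' as $W^{\pm}_{X}\oplus W^{\pm}_{Y}$ and that $\sw$ is then properly homotopic over $B$ to a fiberwise smash $\sw_{X}\wedge\sw_{Y}$ --- is exactly the hard point, not a citation: one would need uniform properness of the approximations as the metric degenerates, control of the subspaces $W^{-}_{*}(g_{T})$ (which vary with the neck length), and a comparison of approximations taken for different metrics, all uniformly over $B$. Deferring this to ``checking uniformity of Bauer's estimates'' does not close the gap, because the estimates you need are not the ones in \cite{Bauer-2}.

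The paper's proof instead runs Bauer's permutation argument fiberwise: the same homotopies apply over the compact base because the estimates can be made uniform in the base parameter, so $\sw$ and $\sw^{\sigma}$ are properly homotopic bundle maps over $B$; the only new feature in the spin-family setting is that the $S^{4}$-family over the circle admits two fiberwise spin structures, related by $\tau$, which are isomorphic and whose Bauer--Furuta contribution is the identity map of zero-dimensional bundles, hence trivial. If you wish to retain a stretching proof you would have to establish a genuinely new family gluing theorem for finite-dimensional approximations under neck degeneration; as written, your argument asserts rather than proves it, and attributes it to a source where it does not appear. A secondary (fixable) point: your claim that ``the twist enters only through its endpoint $\tau$'' should be justified by noting that $\alpha(t)$ acts by isometries of the round neck and that the spin lift $\tilde\alpha$ ends at $\tau$, which is how the paper's formulation of the spin family \eqref{eq:sum-XY-spin} already encodes it.
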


\begin{remark}
    There is an asymmetry in the conclusion here (only the signature
    of $X$ matters, not the signature of $Y$)
    because of the asymmetry in the construction of
    the family of spin structures.
\end{remark}

\begin{proof}
    Let us abbreviate our notation a little by taking the spin
    structures as implied. We write $X^{\tau}$ for the family over $B$
    with twisted spin structure, and we write $X^{\tau} \csum_{\alpha}
    Y$ for the result of summing $Y$ to each fiber of $X^{\tau}$ using
    the path $\alpha$ to vary the gluing. The assertion of the
    proposition can then be rephrased (using
    Proposition~\ref{prop:twist-calc}) as:
    \[
               \eta(X^{\tau} \csum_{\alpha} Y) = \eta(X^{\tau})
               \mathbin{\scriptstyle \times} \eta(Y).
   \]
    This is the product formula for the Bauer-Furuta invariants from
    \cite{Bauer-2}, extended to the case of families.

    The proof from
    \cite{Bauer-2} extends without much difficulty. To clarify this,
    we repeat the main setup steps from \cite{Bauer-2}, in the families
    context. Let $Z^{a}\to B$ be a finite collection of closed
    4-manifolds over a base $B$, indexed by $a\in A$, a finite
    set. Let $Z$ be the disjoint union. For each $a$, suppose there is
    a decomposition of $Z^{a}$ as a fiberwise connected sum, realized
    explicitly by a smooth embedding of a family of collars $C^{a}$
    over $B$. That is, $C^{a}\subset Z^{a}$ is a bundle over $B$ with fiber
    $[-1,1]\times S^{3}$, which we take to be equipped with a
    family of metrics isometric to the standard one on each fiber. Let
    $C\hookrightarrow Z$ be the union. We can write
    \[
              Z = Z_{+} \cup Z_{-},\qquad C = Z_{+} \cap Z_{-}
          \]
    where $Z_{+}$ and $Z_{-}$ are families of manifolds with boundary, with
    boundary components indexed by $A$.  Let $\sigma : C \to C$ be an
    automorphism of $C$ over the base $B$, which permutes the
    components by an even permutation of $A$. Let $Z^{\sigma}$ be
    obtained as the union of $Z_{+}$ and $Z_{-}$, attached along $C$
    using the automorphism $\sigma$.

    If $Z$ is given a fiberwise spin structure, and if $\sigma : C \to
    C$ is lifted to an isomorphism of spin families, then $Z^{\sigma}$
    also acquires the structure of a spin family over $B$. We have
    Bauer-Furuta invariants arising from finite-dimensional
    approximations $\sw$ and $\sw^{\sigma}$ for these two
    families. (For a disjoint union, the Seiberg-Witten map $\SW$ is
    defined to be the fiber product over $B$.)

    In the case
    that $B$ is a point, the construction of $Z^{\sigma}$ from $Z$ is
    the same setup as in \cite{Bauer-2}, and in Section~3 of that
    paper as series of homotopies is constructued, to show that
    that finite-dimensional approximations
    $\sw$ and $\sw^{\sigma}$ are homotopic. The same homotopies
    can be applied fiberwise over $B$, because all the estimates can
    be made uniformly over the compact base. This establishes that $\sw$ and
    $\sw^{\sigma}$ are properly homotopic bundle maps over $B$. As in
    \cite{Bauer-2}, the application to a connected sum of two manifolds is
    deduced by considering the case that $|A|=3$ and taking the fibers
    of $Z$ to be
    \[
        (X \csum S^{4}) \sqcup (S^{4} \csum Y) \sqcup (S^{4}
                 \csum S^{4}).
             \]
     A cyclic permutation $\sigma$ of order $3$ is even and the
     resulting family $Z^{\sigma}$ has fibers
     \[
                (X \csum Y) \sqcup (S^{4} \csum S^{4}) \sqcup (S^{4}
                 \csum S^{4}).
             \]
      A family of $4$-spheres over the circle has two possible spin
      structures, related by the twist $\tau$, but the resulting two
      spin families are
      isomorphic. The Bauer-Furuta invariant of either family is
      represented by the identity map between zero-dimensional vector
      bundles over $B$, which contributes trivially. So the homotopy
      between $\sw$ and $\sw^{\sigma}$ identifies the invariant of the
      family with fibers 
      $X\csum Y$ with that of the family with fibers $X \sqcup Y$.
\end{proof}

We now return to the first theorem stated in this paper.

\begin{proof}[Proof of Theorem~\ref{thm:Dehn}]
Consider the case that $X$ and $Y$ are both $\KK$ surfaces, and
form the family of 4-manifolds
\[
            p : X \csum_{\alpha} X \to B, \qquad (X = \KK).
\]
There is a unique spin structure up to isomorphism on the fiber, and
there are two ways to equip the family over $B$ with fiberwise spin
structures. In the notation of the constructions above, these are the
spin families $X^{\tau} \csum_{\alpha} X$ and
$X \csum_{\alpha} X^{\tau}$. The previous proposition says that the
Bauer-Furuta invariant for each of these families is $(\eta_{1})^{3}$,
because $\eta(X) = \eta_{1}$. The cube of $\eta_{1}$ is the element of
order $2$ in $\pi^{s}_{3}=\Z/24$, and in particular is non-zero. It
follows that the underlying family of smooth 4-manifolds
$X \csum_{\alpha} X$ is a non-trivial family over the circle. The
monodromy of this family is the mapping class of the Dehn twist
$\delta$ supported in the collar where the connected sum consruction
is made. It follows that $\delta$ is not isotopic to the identity.
\end{proof}

\section{Additional remarks}
\label{sec:additional}

\paragraph{More Dehn twists.}
The non-triviality of $\delta$ in Proposition~\ref{cor:DehnBoundary}
is a question raised in a more general form by
Giansiracusa in \cite{Giansiracusa}.
To describe this,  let $X$ be a simply-connected, closed, spin
4-manifold, and let $X^{(n)}$ be obtained from $X$ by removing $n$
disjoint balls. Let $\Diff(X^{(n)}, \partial)$ denote the group of
diffeomorphisms which are the identity in a neighborhood of the
boundary, so that there is map
\[
               \Diff(X^{(n)}, \partial) \to \Diff(X^{(n)}).
\]
It is shown in \cite{Giansiracusa} that the corresponding map on
$\pi_{0}$ has kernel equal to either $(\Z/2)^{n-1}$ or
$(\Z/2)^{n}$. The ambiguity results from the question of whether the
particular diffeomorphism $\delta^{(n)}$, defined as the composite of the Dehn
twists around the $n$ disjoint spheres parallel to the boundary
components, is isotopic to the identity in $\Diff(X^{(n)},
\partial)$. For given $X$, the answer to this question is independent
of $n$. Proposition~\ref{cor:DehnBoundary} is the statement that
$\delta^{(n)}$ is non-trivial in the case of a $\KK$ surface. As
pointed out in \cite{Giansiracusa}, the isotopy class  of
$\delta^{(n)}$ is trivial if $w_{2}(X)$ is non-zero,
and is also trivial in the case of
$S^{2}\times S^{2}$ as one can see by using a circle action on the
manifold. 

\paragraph{Homotopy {\boldmath $\KK$} surfaces.}
It follows from the results of \cite{Morgan-Szabo} and
\cite{Bauer-Furuta-1} that $\eta(X) = \eta_{1}$ for any homotopy $\KK$
surface $X$. Theorem~\ref{thm:Dehn} therefore applies equally well to
 homotopy $\KK$ surfaces.

\paragraph{Exploiting equivariance of {\boldmath $\sw$}.}

The Seiberg-Witten maps $\sw$  are
equivariant for the action of the group
$\mathit{Pin}(2) \subset \mathit{Sp}(1)$ generated by the circle $e^{I\theta}$
and the element $J$. These act by multiplication on the quaternion
vector spaces $V^{\pm}$. On $U^{\pm}$ the circle action is trivial and
$J$ acts by $-1$. This extra structure is exploited in \cite{Furuta}
and in \cite{Bauer-Furuta-1, Bauer-2}, but we have not used it here
except in an auxiliary role, to construct an isotopy in the proof of
Proposition~\ref{prop:twist-calc}. It would be interesting to see if the
equivariant version can be used to extend Theorem~\ref{thm:Dehn} to
some other cases.

\paragraph{Higher dimensions.}

In higher dimensions, the possible non-triviality of the boundary Dehn
twist in Proposition~\ref{cor:DehnBoundary} is a question addressed in
detail by Kreck in \cite{Kreck} for almost-parallelizable
$(k-1)$-connected $2k$-manifolds; and in the same setting, the results
there allow one to determine when a Dehn twist on the neck of a
connected sum is non-trivial. Specifically, let $X$ be a
$(k-1)$-connected and almost parallelizable $2k$-manifold, and let
$X'$ denote the manifold-with-boundary obtained by removing a ball.
Then in the notation of \cite{Kreck} there is assigned to $X$ an
element $\varSigma_{X}$ of order at most two in the group
$\Theta_{2k+1}$ of homotopy spheres of dimension $1$ higher. This
assignment is additive for connected sums, and if the dimension of $X$
is at least $6$, then $\varSigma_{X}$ is zero if and only if the Dehn
twist on the sphere parallel to the boundary of $X'$ is zero in
$\pi_{0}(\Diff(X', \partial X'))$.  For a connected sum
$X_{1}\# X_{2}$, it can also be deduced from \cite{Kreck} that the
Dehn twist around the neck is trivial if and only if
$\varSigma_{X_{1}}$ is zero in the quotient
$\Theta_{2k+1}/(\varSigma_{X_{1}\#X_{2}})$. Since these elements have
order at most 2, this criterion for non-triviality is simply that
$\varSigma_{X_{1}}$ and $\varSigma_{X_{2}}$ are both non-zero.
 
As an example of the computations in \cite{Kreck}, if the dimension is
$8$, then $\varSigma_{X}$ is non-zero if the index of the Dirac operator
on $X$ is odd.  If the index of the Dirac operator on $X$ is even,
then the vanishing of $\varSigma_{X}$ is dependent on the smooth
structure.  As a special case, if $X$ is the exotic sphere of
dimension $8$, then the Dehn twist in the neck of $X\#X$ is a
non-trivial element of $\pi_{0}(\Diff(X\#X))$. In this case, the
connected sum $X\#X$ is $S^{8}$ and it is presented as the union of
two balls: the non-triviality of the Dehn twist arises from the
non-standard parametrization of the standard $7$-sphere along which
the Dehn twist is performed.

\bibliographystyle{plain}
\bibliography{k3-families}

\end{document}